\numberwithin{equation}{section}
\newtheorem{prop}{Proposition}[section]
\newtheorem{thm}{Theorem}[section]
\newtheorem{lem}{Lemma}[section]
\newtheorem{rem}{\bf Remark}[section]
\begin{document}
	\title{Non-dense orbit sets carry full metric mean dimension}
	
	\author{Jiao Yang$^1$, Ercai Chen$^1$, Xiaoyao Zhou*$^1$ }  
	\address
	{1.School of Mathematical Sciences and Institute of Mathematics, Nanjing Normal University, Nanjing 210023, Jiangsu, P.R.China}
	\email{jiaoyang6667@126.com}
	\email{ecchen@njnu.edu.cn}
	\email{zhouxiaoyaodeyouxian@126.com}
	\date{}	
	\renewcommand{\thefootnote}{}
\footnotetext{*corresponding author}
\date{}
\maketitle

\begin{center}
	\begin{minipage}{120mm}
		\small {\bf Abstract.} Let $(X,d)$ be a compact metric space, $f:X\rightarrow X$ be a continuous transformation with
		the specification property. we consider non-dense orbit set $E(z_0)$ and show that for any non-transitive point $z_0\in X$, this set $E(z_0)$ is empty or carries full Bowen upper and lower metric mean dimension.
	\end{minipage}
\end{center}

\section{Introduction}
A number $\lambda$ is called badly approximable if $|\lambda-\frac{p}{q}|>\frac{c}{q^2}$ for some $c>0$ and all rational numbers $\frac{p}{q}$. In 1931, Jarn\'{\i}k \cite{J31} proved that the set of badly approximable  numbers is of full Hausdorff dimension. In 1997,  Abercrombie and Nair \cite{AN97} proved the non-dense set for the expanding rational map of the Riemann sphere acting on its Julia set $J$ has full Hausdorff dimension. In fact, authors \cite{KTV06} generalized the result of \cite{AN97} by some more general systems. 

Let $(X,d,f)$ be a topological dynamical system, where $(X,d)$ is a compact metric space and $f:X\rightarrow X$ is a continuous map. For any $x\in X$, let $O_f(x)$ denote the orbit of $x$, i.e., $O_f(x):=\{x,f(x),\cdots,f^n(x),\cdots\}$. For any $z_0\in X$, we define
$$E(z_0)=\{x\in X:z_0\notin \overline{\{f^n(x):n\geq 0\}}\},$$
where $x\in E(z_0)$ indicates $z_0$ is badly approximated by the orbit of $x$. When $f$ is Guass map, $E(0)$ is just the set of the badly approximable numbers. By the definition, any point in $E(z_0)$ has a non-dense forward orbit in $X$. Recently, Zhao, Yang and Zhou \cite{ZYZ} showed that $E(z_0)$ can have full topological pressure.

It should be noted that on a compact smooth manifold with dimension greater than one, homeomorphisms with infinite topological entropy are $C^0$ generic \cite{Y80}. Recently, Bobok and Troubetzkoy \cite{BT20} showed that in the space of continuous non-invertible maps of the unit interval preserving the Lebesgue measure, which is equipped with the uniform metric, the functions satisfying specification property and infinite topological entropy form a dense $G_\delta$ set. Thus, a more subtle question arises naturally: Given a system having both the specification property and infinite topological entropy, does $E(z_0)$ carry more information besides infinite Bowen topological entropy?


Mean topological dimension introduced by Gromov \cite{G99} is a new topological invariant in topological dynamical systems.  
Later, Lindenstrauss and Weiss \cite{LW00} introduced the metric mean dimension, which is a metric version of the mean dimension. Metric mean dimension, similarly to the topological entropy, measures the complexity of systems with infinite entropy. Mean dimension has applications to topological dynamics \cite{LW00,L99}. Similar as the topological entropy, the metric mean dimension has a strong connection
with ergodic theory, and lots of variational principles have been established, see \cite{GS21,LT19}.                                                                                                                                                                                                                                                                                                                                                                                                                                                                                                                                                                                                                                                                                                                                                                                                                                                                                                                                                                                                                                                                                                                                                                                                                                                                                                                                                                                                                                                                                                                                                                                                                                                                                                                                                                                                                                                                                                                                                   

Let us go to back to our question. Recently, we state our result precisely

\subparagraph*{Main result.}In this paper, we consider a dynamical system $(X,d,f)$ satisfy specification, i.e., for
any $\epsilon>0,$ there exists an integer
$m=m(\epsilon)$ such that for arbitrary finite intervals $\{I_{j}=[a_{j},b_{j}]\}_{j=i}^k$ with  $a_{j+1}-b_j\geq m$ for $j=1,2,\cdots,k-1$ and any $x_{1},\cdots,x_{k}\in X$, there exists a point $x\in X$  such that
\begin{align*}
	d(f^{p+a_{j}}(x),f^{p}(x_{j}))<\epsilon\;\;\text{for all}\;\;p=0,1,\cdots,b_{j}-a_{j}\;\;\text{and every}\;\;j=1,2,\cdots,k.
\end{align*}
For convenience, we call $\{[b_{j},a_{j+1}],j=1,\cdots,k-1\}$ the gaps. 

\begin{thm}\label{th1}
	Suppose that $(X,d,f)$ be a dynamical system
	with specification property. For any non-transitive point $z_0\in X$, i.e. $\overline{O_f(z_0)}\neq X$, then either $E(z_0)=\emptyset$ or
	\begin{equation}\label{gs1}
		\begin{split}
			{\rm\overline{mdim}}_M^B(E(z_0),f,d)={\rm\overline{mdim}}_M(X,f,d)\\
			{\rm\underline{mdim}}_M^B(E(z_0),f,d)={\rm\underline{mdim}}_M(X,f,d).	
		\end{split}
	\end{equation} 
\end{thm}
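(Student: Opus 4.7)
The plan is to reduce the theorem to the uniform scale-wise estimate
\[
h^B_{\mathrm{top}}(E(z_0), f, d, \epsilon) \geq h_{\mathrm{top}}(X, f, d, 4\epsilon) - o_\epsilon(1),
\]
where $h^B_{\mathrm{top}}$ denotes the Bowen topological entropy; dividing by $|\log \epsilon|$ and passing to $\limsup$ (resp.\ $\liminf$) as $\epsilon \to 0^+$ then yields both equalities in \eqref{gs1}, since the reverse inequalities are immediate from $E(z_0) \subset X$ and $|\log 4\epsilon|/|\log \epsilon| \to 1$. I would assume $E(z_0) \neq \emptyset$ and fix $x_0 \in E(z_0)$, so that some $\eta > 0$ satisfies $d(f^n(x_0), z_0) \geq 3\eta$ for every $n \geq 0$; for each small $\epsilon \in (0, \eta/4)$ I let $m := m(\epsilon)$ be the specification gap.

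For large $n$, take a maximal $(n, 4\epsilon)$-separated set $W_n \subset X$ and pass to the good subcollection
\[
W_n^* := \{ w \in W_n : d(f^p(w), z_0) \geq 2\eta \text{ for every } 0 \leq p \leq n + m \}.
\]
A counting step shows $|W_n^*|$ has exponential growth rate $h_{\mathrm{top}}(X, f, d, 4\epsilon)$. Fix $L \geq 1$ and set $T := n + 2m + L$; for each $\mathbf{w} = (w_j) \in (W_n^*)^{\mathbb{N}}$, apply specification to obtain $x(\mathbf{w})$ whose orbit $\epsilon$-shadows $w_j$ on $[(j-1)T, (j-1)T + n - 1]$ and $\epsilon$-shadows $x_0$ on $[(j-1)T + n + m, (j-1)T + n + m + L - 1]$, with the two length-$m$ gaps in between. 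To verify $x(\mathbf{w}) \in E(z_0)$: on each $w_j$-shadowing the orbit is $\geq \eta$ from $z_0$ by the definition of $W_n^*$ together with shadowing error $<\epsilon$; on each $x_0$-shadowing it is $\geq 2\eta$ from $z_0$; on the length-$m$ gap after a $w_j$-shadowing, the iterate equals $f^{q+1}$ of a point $\epsilon$-close to $f^{n-1}(w_j)$ and hence, by uniform continuity of $f^m$, lies within $\omega(f^m, \epsilon)$ of $f^{n+q}(w_j)$; choosing $\epsilon$ so that $\omega(f^m, \epsilon) < \eta$ and invoking the extended avoidance encoded in $W_n^*$ keeps it outside $B(z_0, \eta)$, and the gap following an $x_0$-shadowing is handled identically via the continuation of $O_f(x_0)$. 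Distinct $\mathbf{w}$'s give $(kT, 2\epsilon)$-separated $x(\mathbf{w})$'s, so equipping the resulting Cantor set $K := \{x(\mathbf{w})\} \subset E(z_0)$ with a product Bernoulli measure and invoking the mass-distribution principle for Bowen topological entropy yields $h^B_{\mathrm{top}}(E(z_0), f, d, 2\epsilon) \geq \log|W_n^*|/T$; letting $n \to \infty$ produces the required scale-wise estimate.

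The main obstacle is the counting claim $\log|W_n^*|/n \to h_{\mathrm{top}}(X, f, d, 4\epsilon)$, equivalent to showing that the $(n, 4\epsilon)$-separated orbits meeting $B(z_0, 2\eta)$ within their first $n + m$ iterations form a subcollection with strictly smaller exponential growth rate. This is where the non-transitivity hypothesis $\overline{O_f(z_0)} \neq X$ becomes essential: combined with specification, it guarantees a fixed open region $V \subset X \setminus \overline{O_f(z_0)}$ in which one can produce $(n, 4\epsilon)$-separated orbits of full entropy naturally avoiding $B(z_0, 2\eta)$. A secondary technical issue is the continuity estimate $\omega(f^m, \epsilon) < \eta$ with $m = m(\epsilon)$ depending on $\epsilon$, which may force passing to a subsequence of scales $\epsilon_k \to 0$; this restriction is harmless for the $\limsup$/$\liminf$ appearing in the metric mean dimension definitions.
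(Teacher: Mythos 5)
Your overall architecture (specification-glued Moran construction, product/Bernoulli measure, entropy distribution principle, separation of the glued points, division by $|\log\epsilon|$ at the end) is the same as the paper's. But the step where the two proofs genuinely differ --- verifying that the constructed points belong to $E(z_0)$ --- is exactly where your argument has two gaps, and the mechanism you chose is what creates them.

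First, the counting claim $\log|W_n^*|/n \to h_{\mathrm{top}}(X,f,d,4\epsilon)$, which you yourself flag as the main obstacle, is not justified and is the crux rather than a technicality. You are asking for full-entropy families of orbits that avoid the \emph{fixed metric ball} $B(z_0,2\eta)$ for $n+m$ iterates, where $\eta$ is determined by $x_0$ and does not shrink with $\epsilon$. Avoiding a fixed open ball for all relevant times can cost a definite fraction of the separated points, and in general the maximal invariant set avoiding such a ball need not carry full metric mean dimension; proving that it does (even up to $o(|\log\epsilon|)$) is essentially a theorem of the same difficulty as the one you are proving. Your appeal to non-transitivity here is misplaced: $\overline{O_f(z_0)}\neq X$ supplies an open set $V$ disjoint from $\overline{O_f(z_0)}$, i.e.\ points far from the \emph{whole orbit closure of $z_0$}; it says nothing about producing full-entropy orbits that stay far from $z_0$ itself. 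Second, your control of the specification gaps via $\omega(f^{m(\epsilon)},\epsilon)<\eta$ is circular: $m(\epsilon)$ is dictated by the specification property and typically grows as $\epsilon\to0$, so $\omega(f^{m(\epsilon)},\epsilon)$ need not be small for \emph{any} small $\epsilon$ (e.g.\ a Lipschitz system with $m(\epsilon)$ growing faster than $|\log\epsilon|/\log(\mathrm{Lip} f)$); passing to a subsequence of scales cannot repair this.

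Both gaps stem from requiring every single iterate of $x(\mathbf{w})$ to stay $\eta$-far from $z_0$ in the metric $d$. The paper avoids this entirely: membership in $E(z_0)$ only requires the orbit to miss \emph{some} neighborhood of $z_0$, and the paper takes that neighborhood to be a Bowen ball $B_{2M+m}(z_0,\epsilon)$. Concretely, it uses non-transitivity to pick $y$ with $d(y,\overline{O_f(z_0)})\geq 2\epsilon_0$ and splices a visit $\epsilon$-near $y$ into every window of bounded length of the constructed orbit; if some iterate $f^j(x)$ entered $B_{2M+m}(z_0,\epsilon)$, the shadowing of $O_f(z_0)$ over that window would force $d(y,f^q(z_0))<2\epsilon$ for some $q$, a contradiction. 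This requires no restriction of the separated set (so no counting claim), no control during the specification gaps (so no $\omega(f^{m(\epsilon)},\epsilon)$ estimate), and does not even use the point $x_0\in E(z_0)$. To fix your proof you would need to replace the metric-ball avoidance by this Bowen-ball avoidance, i.e.\ replace the inserted $x_0$-segments by visits to a point $y$ that is uniformly far from all of $\overline{O_f(z_0)}$, inserted with bounded frequency.
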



\section{Basic notions and definitions}


Let $n\in \mathbb{N}$. For $x,y$, we define the $n$th Bowen metric $d_n$ on $X$ as
$$d_{n}(x,y)=\max\{d(f^{i}(x),f^{i}(y)):i=0,\cdots,n-1\}.$$ 
For each $\epsilon>0$, the Bowen ball of radius $\epsilon$ and order $n$ in the metric $d_n$ around $x$ is given by 
\begin{equation*}
	B_{n}(x,\epsilon)=\{y\in X:d_{n}(x,y)<\epsilon\}.
\end{equation*}

Now given $Z\subseteq X,\epsilon>0$ and $N\in \mathbb{N}$. For each $\lambda\in\mathbb{R},$ let
\begin{equation*}\begin{split}
		m(Z,\lambda,N,\epsilon)&=\inf\limits_{\varGamma}\left\lbrace \sum\limits_{i\in I}\exp\left(-\lambda n_i\right)\right\rbrace ,\\
\end{split}\end{equation*}
where the infimum is taken over all finite or countable collection  $\varGamma=\{B_{n_i}(x_i,\epsilon)\}_{i\in I}$ such that $Z\subseteq \cup_{i\in I}B_{n_i}(x_i,\epsilon)$ and $\min\{n_i: i \in I\}\geq N$. Note that $m(Z,\lambda,N,\epsilon)$ does not decrease as N increases, and therefore the following limit exists
\begin{align*}
	m(Z,\lambda,\epsilon)&=\lim\limits_{N\to\infty}m(Z,\lambda,\varphi,N,
	\epsilon).
\end{align*}
The function is non-increasing in $\lambda$ and takes value $\infty$ and $0$ at all but at most one value of $\lambda$. Denoting the critical value of $\lambda$ by
\begin{align*}
	h^B_{top}(Z,f,\epsilon)&=\inf\{\lambda\in\mathbb{R}:m(Z,\lambda,\epsilon)=0\}\\
	&=\sup\{\lambda\in\mathbb{R}:m(Z,\lambda,\epsilon)=\infty\}.
\end{align*}
This implies that $m(Z,\lambda,\epsilon)=\infty,$ when
$\lambda<h^B_{top}(Z,f,\epsilon),$ and $m(Z,\lambda,\epsilon)=0$ when $s>h^B_{top}(Z,f,\epsilon)$. Note that $m(Z,h^B_{top}(Z,f,\epsilon),\epsilon)$ could be $\infty,0$ or some positive finite number. The Bowen topological
entropy is defined by $h^B_{top}(Z,f)=\lim\limits_{\epsilon\to 0}h^B_{top}(Z,f,\epsilon)$ (see \cite{TV03}). The Bowen upper and lower metric mean dimension of $f$ on $Z$ with respect to $d$ are respectively defined by
\begin{align*}
	{\rm\overline{mdim}}^B_M(Z,f,d)&=\limsup \limits_{\epsilon\to 0}\frac{h^B_{top}(Z,f,\epsilon)}{\lvert\log\epsilon\rvert},\\
	{\rm\underline{mdim}}^B_M(Z,f,d)&=\liminf \limits_{\epsilon\to 0}\frac{h^B_{top}(Z,f,\epsilon)}{\lvert\log\epsilon\rvert}.
\end{align*}

The classical metric mean dimension is defined as follows. Given $n\in \mathbb{N}$ and $\epsilon>0$. A set $E\subset X$ is called an
$(n,\epsilon)$ separated set for $X$ if for every $x\neq y\in
E$, we have $d_{n}(x,y)>\epsilon$. Define $s(f,X,n,\epsilon)$ to be the largest cardinality of an $(n,\epsilon)$ separated set of $X$. Notice that $s_{sep}(f,X,n,\epsilon)$ is finite by compactness.The upper and lower metric mean dimension of $f$ with respect to $d$ respectively are given by
\begin{align*}
	{\rm\overline{mdim}}_M(X,f,d)&=\limsup\limits_{\epsilon\to 0}\frac{\limsup\limits_{n\to\infty}\frac{1}{n}\log s(f,X,n,\epsilon)}{|\log\epsilon|},\\
	{\rm\underline{mdim}}_M(X,f,d)&=\liminf\limits_{\epsilon\to 0}\frac{\limsup\limits_{n\to\infty}\frac{1}{n}\log s(f,X,n,\epsilon)}{|\log\epsilon|}.
\end{align*}
It is clear that the metric mean dimension vanishes if topological entropy is finite.


\begin{rem}
	If $Z_{1}\subset Z_{2}\subset X$, then 	
	\begin{align}\label{gs2.1}
		{\rm\overline{mdim}}^B_M(Z_1,f,d)\leq
		{\rm\overline{mdim}}^B_M(Z_2,f,d),\;\;
		{\rm\underline{mdim}}^B_M(Z_1,f,d)\leq {\rm\underline{mdim}}^B_M(Z_2,f,d).
	\end{align}
\end{rem}

The following proposition complete proof is shown in \cite{LL22} appendix.
\begin{prop}\label{prop}
	For any $f$-invariant and compact nonempty subset $Z\subset X$, one has
	\begin{align*}
		{\rm\overline{mdim}}^B_M(Z,f,d)={\rm\overline{mdim}}_M(Z,f,d),\;\;\;{\rm\underline{mdim}}^B_M(Z,f,d)={\rm\underline{mdim}}_M(Z,f,d).
	\end{align*}
\end{prop}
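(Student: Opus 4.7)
The aim is a scale-sensitive version of Bowen's classical equivalence $h^B_{top}(Z,f)=h_{top}(f|_Z)$ for compact invariant $Z$. I would establish, at each $\epsilon>0$, the sandwich
\begin{equation*}
h_{top}(f|_Z,2\epsilon) \;\leq\; h^B_{top}(Z,f,\epsilon) \;\leq\; h_{top}(f|_Z,\epsilon),
\end{equation*}
where $h_{top}(f|_Z,\epsilon):=\limsup_{n\to\infty}\frac{1}{n}\log s(f,Z,n,\epsilon)$ is the classical scale-$\epsilon$ entropy of $f|_Z$. Since $|\log(2\epsilon)|/|\log\epsilon|\to 1$ as $\epsilon\to 0$, dividing through by $|\log\epsilon|$ and taking $\limsup_{\epsilon\to 0}$ (respectively $\liminf$) collapses the factor of two and yields both equalities in the proposition.

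\textbf{Upper inequality.} A minimum $(N,\epsilon)$-spanning set $F_N\subset Z$ furnishes the admissible Bowen cover $\{B_N(x,\epsilon):x\in F_N\}$ of uniform order $N$, hence $m(Z,\lambda,N,\epsilon)\leq|F_N|e^{-\lambda N}$. For any $\lambda>h_{top}(f|_Z,\epsilon)$ the right side decays as $N\to\infty$, so $m(Z,\lambda,\epsilon)=0$ and $h^B_{top}(Z,f,\epsilon)\leq h_{top}(f|_Z,\epsilon)$.

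\textbf{Lower inequality.} This is where $f$-invariance of $Z$ enters. Fix an admissible cover $\{B_{m_i}(x_i,\epsilon)\}_{i\in I}$ with $\min m_i\geq N$ and a maximum $(n,2\epsilon)$-separated set $E_n\subset Z$ with $n\geq N$. If distinct $y,y'\in E_n$ share some $B_{m_i}(x_i,\epsilon)$ with $m_i<n$, then $d_{m_i}(y,y')<2\epsilon$ and $d_n(y,y')>2\epsilon$ together force $d_{n-m_i}(f^{m_i}y,f^{m_i}y')>2\epsilon$; invariance keeps the images inside $Z$, so they form an $(n-m_i,2\epsilon)$-separated subset of $Z$. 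Summing over $i$ gives
\begin{equation*}
s(f,Z,n,2\epsilon)\;\leq\;\sum_{i:m_i<n}s(f,Z,n-m_i,2\epsilon)\;+\;\bigl|\{i:m_i\geq n\}\bigr|.
\end{equation*}
To convert this into a uniform positive lower bound on $\sum_i e^{-\lambda m_i}$, I would combine it with a variational-principle input in the spirit of \cite{GS21,LT19}: for any $\lambda<h_{top}(f|_Z,2\epsilon)$, produce an invariant ergodic measure $\nu$ supported on $Z$ whose Brin--Katok local entropy at scale $\epsilon$ exceeds $\lambda$; the mass distribution principle then gives $\sum_i e^{-\lambda m_i}\geq c(\nu)>0$ independent of the cover, so $m(Z,\lambda,\epsilon)>0$ and $h^B_{top}(Z,f,\epsilon)\geq\lambda$. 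Letting $\lambda\uparrow h_{top}(f|_Z,2\epsilon)$ closes the sandwich.

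\textbf{Main obstacle.} The delicate point is the scale distortion in the measure-theoretic step: coercing the local entropy of $\nu$ at scale $\epsilon$ (a Brin--Katok quantity) to approach the topological entropy at scale $2\epsilon$, uniformly as $\epsilon\to 0$. A purely combinatorial attack on the counting inequality above stumbles because substituting the upper bound $s(f,Z,m,2\epsilon)\leq C_\mu e^{\mu m}$, valid only for $\mu>h_{top}(f|_Z,2\epsilon)$, yields $\sum_i e^{-\mu m_i}\gtrsim s(f,Z,n,2\epsilon)e^{-\mu n}$, whose right side cannot be kept uniformly positive since it decays in $n$. The measure-theoretic route sidesteps this tension by bounding $\sum_i e^{-\lambda m_i}$ through total $\nu$-mass, at the cost of tracking the $\epsilon\to 2\epsilon$ distortion through Egorov's argument---a distortion that is ultimately absorbed harmlessly when dividing by $|\log\epsilon|$.
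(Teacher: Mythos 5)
You should first note that the paper does not actually prove this proposition: it is quoted with a pointer to the appendix of \cite{LL22}, so your sketch has to stand on its own. The easy half of your plan is fine: covering $Z$ by the Bowen balls of a minimal $(N,\epsilon)$-spanning set gives $m(Z,\lambda,N,\epsilon)\le |F_N|e^{-\lambda N}$ for every $N$, hence $h^B_{top}(Z,f,\epsilon)\le\limsup_n\frac1n\log s(f,Z,n,\epsilon)$, and your remark that constant-factor changes of scale ($\epsilon$ versus $2\epsilon$, etc.) are absorbed after dividing by $|\log\epsilon|$ is exactly the right normalization observation.

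The gap is in the lower inequality, which you reduce to an unproven black box: ``for any $\lambda<h_{top}(f|_Z,2\epsilon)$ produce an ergodic $\nu$ on $Z$ whose Brin--Katok local entropy at scale $\epsilon$ exceeds $\lambda$.'' That statement is the entire content of the hard direction, and it is not an off-the-shelf consequence of \cite{GS21} or \cite{LT19}. A Misiurewicz-type construction from $(n,2\epsilon)$-separated sets yields an invariant measure with large \emph{total} entropy $h_\nu(f)$, but $h_\nu(f)$ controls the local quantity $\liminf_n-\frac1n\log\nu(B_n(x,\epsilon))$ only in the limit $\epsilon\to0$; at a fixed scale the local entropy can be strictly smaller than $h_\nu(f)$, so the classical variational principle does not deliver the pointwise decay $\nu(B_n(x,\epsilon))\le Ce^{-\lambda n}$ that your mass distribution step needs. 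What the literature provides are fixed-scale statements phrased in terms of Katok entropy, with scale loss by a constant factor, and converting a Katok-entropy bound into Bowen-ball measure decay on a positive-measure set costs a further covering argument and another factor in the scale (as does handling covers by Bowen balls centered off the support of $\nu$). All of these losses are indeed harmless for metric mean dimension, so your architecture is salvageable and is essentially the one used in \cite{LL22}; but as written the pivotal lemma is asserted rather than proved, and the exact sandwich $h_{top}(f|_Z,2\epsilon)\le h^B_{top}(Z,f,\epsilon)$ with the specific factor $2$ is not justified (nor needed). To close the argument you must either prove a fixed-scale measure-construction lemma yourself, tracking the scales explicitly, or quote a precise fixed-scale result and redo the bookkeeping with whatever constants it carries.
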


\section{Proof of Theorem \ref{th1}}
In this section, let's turn to prove our main result.
\subsection{Proof for the case of the upper metric mean dimension}
we assume $E(z_0)=\emptyset$ and show (\ref{gs1}). We first consider the case of the upper metric mean dimension. Note that $E(z_0)\subset X$, and therefore  
$${\rm\overline{mdim}}^B_M(E(z_0),f,d)\leq{\rm\overline{mdim}}^B_M(X,f,d).$$
Proposition \ref{prop} implies that
\begin{align*}
	{\rm\overline{mdim}}^B_M(E(z_0),f,d)\leq{\rm\overline{mdim}}_M(X,f,d).
\end{align*}
For any constant $C<{\rm\overline{mdim}}_M(X,f,d)$,
we only need to show that
\begin{align}\label{gs3.1}
	{\rm\overline{mdim}}^B_M(E(z_0),f,d)\geq C.
\end{align}  

Firstly, since $z_0$ is non-transitive point, we can choose $y\in X$ and $\epsilon_0>0$ such that
\begin{align}\label{gs3.2}
	d(y,\overline{O_f(z_0)})\geq 2\epsilon_0.
\end{align}  
Fix $\gamma>0$ we can choose a $\epsilon<\epsilon_0$ and a sequence $\{n_k\}_{k\geq1}\subset\mathbb{N}$ such that there exists maximal $(n_k,7\epsilon)$-separated set $\mathcal{S}_k$ of $X$ which is always a  $(n_k,7\epsilon)$-spanning set with 
\begin{align}\label{gs3.3}
	\#\mathcal{S}_k\geq\exp\left(n_k(C-\gamma)\right)|\log7\epsilon|,
\end{align}
\begin{align}\label{gs3.4}
	\frac{h^B_{top}(E(z_0),f,\epsilon)}{|\log\epsilon|}\leq{\rm\overline{mdim}}^B_M(E(z_0),f,d)+\gamma
\end{align}
and
\begin{align}\label{gs3.5}
	({\rm\overline{mdim}}^B_M(E(z_0),f,d)+\gamma)\cdot\frac{|\log\epsilon|}{|\log7\epsilon|}\leq{\rm\overline{mdim}}^B_M(E(z_0),f,d)+2\gamma
\end{align}
by the definitions  $${\rm\overline{mdim}}_M(X,f,d)=\limsup\limits_{\epsilon\to 0}\frac{\limsup\limits_{n\to\infty}\frac{1}{n}\log s(f,X,n,\epsilon)}{|\log\epsilon|},$$
and 
\begin{align*}
	{\rm\overline{mdim}}^B_M(Z,f,d)&=\limsup \limits_{\epsilon\to 0}\frac{h^B_{top}(Z,f,\epsilon)}{\lvert\log\epsilon\rvert}.
\end{align*}

\subsubsection{\bf Construction of the Moran-like fractal $\mathcal{F}$.}

Choose $M>0$ such that 
\begin{align}\label{gs3.6}
	\frac{2m(\epsilon)C}{M+2m(\epsilon)}<\gamma.
\end{align}
Without lose of generality, we can assume that $M<n_1$. Let $c_k=\lceil\frac{n_k}{M}\rceil$ then we break the $n_k$ orbit of $x\in \mathcal{S}_k$ as follows
\begin{align*}
	\{x,f(x),\cdots,f^{M-1}(x)\}\cup\{f^{M}(x),\cdots,f^{2M-1}(x)\}\cup\cdots\\ \cup\{f^{(c_k-1)M}(x),f^{(c_k-1)M+1}(x),\cdots,f^{n_k}(x)\}.
\end{align*}
Now we define the point pair $(x,n)\in X\times\mathbb{N}$ by $(x,n):=\{x,f(x),\cdots,f^{n-1}(x)\}$.

We insert $y$ into every gap, in fact, we translate the point pair $(x,n_k)$ to 
\begin{align*}
	(x,M),(y,1),(f^{M}(x),M),(y,1),\cdots,(y,1),(f^{(c_k-1)M}(x),n_k-(c_k-1)M).
\end{align*}
Denote $m:=m(\epsilon)$. By the specification property, there exists $y'\in X$ such that 
\begin{align*}
	d_M(y',x)<\epsilon,d(f^{M+m}y',y)<\epsilon,\cdots,d_M(f^{(j-1)(M+2m+1)}y',f^{(j-1)M}x)<\epsilon\\
	d(f^{(j-1)(M+2m+1)+M+m}y',y)<\epsilon,\cdots,d_{n_k-(c_k-1)M}(f^{(c_k-1)(M+2m+1)+M+m}y',f^{(c_k-1)M}x)<\epsilon
\end{align*}
i.e., the following set is a non-empty,
\begin{align*}
	B(x,n_k,\epsilon;y)=&\bigcap\limits_{j=1}^{c_k-1}\left\lbrace f^{-(j-1)(M+2m+1)}B_M(f^{(j-1)M}x,\epsilon)\cap f^{-(j-1)(M+2m+1)-M-m}B(y,\epsilon) \right\rbrace\\
	&\cap
	f^{-(c_k-1)(M+2m+1)-M-m}B_{n_k-(c_k-1)M} (f^{(c_k-1)M}x)\neq\emptyset.
\end{align*}
From the above setting, we define $\hat{n}_k:=n_k+(c_k-1)(2m+1)$ which denotes the length of the orbits in the set $B(x,n_k,\epsilon;y)$.

Next, we can choose a sequence and $N_k$ increasing to $\infty$ with $N_0=0$. We enumerate the points in the sets $\mathcal{S}_k$ provided by (\ref{gs3.3}) and write them as follows
\begin{align*}
	\mathcal{S}_k=\{x_i:i=1,2,\cdots,\#\mathcal{S}_k\}.
\end{align*}
We enumerate the points in the set $\mathcal{S}_k$ and consider the set $\mathcal{S}_k$. Let $\overline{x}_k=(x_1^k,\cdots,x_{N_k}^k)\in \mathcal{S}_k^{N_k}$ where $\mathcal{S}_k^{N_k}=\mathcal{S}_k\times\cdots\times\mathcal{S}_k$. Now we set $t_1:=\hat{n}_1N_1+(N_1-1)m$ and if $t_k$ have been defined, we define $t_{k+1}:=t_k+N_{k+1}(\hat{n}_{k+1}+m)$. By the specification property, we have  
\begin{align*}
	B(\overline{x}_1,\cdots,\overline{x}_k;y)=\bigcap\limits_{i=1}^{k}\bigcap\limits_{j=1}^{N_i}f^{-t_{i-1}-m-(j-1)(\hat{n}_i+m)}B(x_j^i,n_i,\epsilon;y)\neq\emptyset.
\end{align*}
We define $\mathcal{F}_k$ by 
\begin{align*}
	\mathcal{F}_k=\bigcap\{\overline{B(\overline{x}_1,\cdots,\overline{x}_k;y)}:(\overline{x}_1,\cdots,\overline{x}_k)\in\prod \limits_{i=1}^{k} \mathcal{S}_i^{N_i}\}.
\end{align*}
Obviously, $\mathcal{F}_k$ is a closed subset of $X$ and $\mathcal{F}_{k+1}\subset \mathcal{F}_{k}$. Define 
\begin{align*}
	\mathcal{F}=\bigcap\limits_{k=1}^{\infty}\mathcal{F}_{k}
\end{align*}
The above construction implies for each $p\in \mathcal{F}$ shadows the points in $\mathcal{S}_i$ for some $i$ with the gap segments $m(\epsilon)$ by the specification property. For any $n>0$ we denotes $n_{rel}$ by the segment of times which shadow the separated points in $\mathcal{S}_i$ for some $i\geq1$. The following lemma shows that $\mathcal{F}\subset E(z_0)$.

\begin{lem}\label{lem3.1}
	For any $x\in \mathcal{F}$, then $x\in E(z_0)$, i.e. $\mathcal{F}\subset E(z_0)$.
\end{lem}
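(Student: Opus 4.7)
The plan is to argue by contradiction. Suppose some $p\in\mathcal F$ has $z_0\in\overline{\{f^n(p):n\geq 0\}}$, so that a sequence $n_j\to\infty$ satisfies $f^{n_j}(p)\to z_0$. I would find, for each $n_j$, a nearby time $\tau_j$ at which $f^{\tau_j}(p)$ is within $\epsilon$ of $y$, pass to a subsequence on which $\tau_j-n_j$ equals a fixed nonnegative integer $k$, and use continuity of $f^k$ to force $f^k(z_0)$ into $\overline{B(y,\epsilon)}$. Since $f^k(z_0)\in\overline{O_f(z_0)}$ and $\epsilon<\epsilon_0<2\epsilon_0$, this will contradict (\ref{gs3.2}).

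The first step is to read off the explicit set $T$ of $y$-insertion times. Unwinding the definition of $B(\overline x_1,\dots,\overline x_k;y)$, every factor of the form $f^{-\tau}B(y,\epsilon)$ occurs at a time
\[
\tau\;=\;t_{i-1}+m+(j-1)(\hat n_i+m)+(j'-1)(M+2m+1)+M+m,\qquad i\geq 1,\ 1\leq j\leq N_i,\ 1\leq j'\leq c_i-1,
\]
and these times depend only on the integers $i,j,j'$, not on the labeling $(\overline x_1,\overline x_2,\dots)$. Let $T$ be the collection of all such $\tau$. Since $p\in\bigcap_k\mathcal F_k$ and $f^\tau$ is continuous, passing to closures gives $d(f^\tau(p),y)\leq\epsilon$ for every $\tau\in T$.

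The second step is a uniform spacing bound for $T$. Within a single shadowing block the successive elements of $T$ are $M+2m+1$ apart, while between the last $y$-insertion of one shadowing block and the first $y$-insertion of the next, the identities $\hat n_i=n_i+(c_i-1)(2m+1)$ and $c_i=\lceil n_i/M\rceil$ combined with the specification gap $m$ bound the gap by $2M+3m+1$. Hence there is a constant $W$ (depending only on $M$ and $m$) such that every window of $W$ consecutive integers past the first element of $T$ meets $T$. With this in hand, define $\tau_j:=\min\{\tau\in T:\tau\geq n_j\}$ for $n_j$ large, so that $0\leq\tau_j-n_j<W$; passing to a subsequence I may assume $\tau_j-n_j=k$ for some fixed $k\in\{0,1,\dots,W-1\}$. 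Continuity of $f^k$ then yields
\[
f^{\tau_j}(p)\;=\;f^k\bigl(f^{n_j}(p)\bigr)\;\longrightarrow\;f^k(z_0),
\]
while $d(f^{\tau_j}(p),y)\leq\epsilon$ passes to the limit as $d(f^k(z_0),y)\leq\epsilon$. Since $f^k(z_0)\in\overline{O_f(z_0)}$ and $\epsilon<\epsilon_0<2\epsilon_0$, this contradicts (\ref{gs3.2}), completing the argument.

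The only step requiring a nontrivial computation is the spacing bound for $T$: one must verify by direct bookkeeping that the transitions between consecutive shadowing blocks, governed by the specification gap $m$ and the remainder $n_i-(c_i-1)M\leq M$, never produce a $y$-free window of size $\geq W$. Once this density of $T$ is in place, the rest is a soft compactness-and-continuity argument.
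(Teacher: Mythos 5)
Your proposal is correct and rests on the same two pillars as the paper's own proof: (i) the $y$-insertion times form a syndetic set whose gaps are bounded by a constant depending only on $M$ and $m(\epsilon)$, because these times are determined by the indices $(i,j,j')$ alone and not by the labels in $\prod_i\mathcal S_i^{N_i}$; and (ii) the separation condition (\ref{gs3.2}). The only difference is the finishing move. The paper shows directly that the forward orbit of $x$ never enters the open Bowen ball $B_{2M+m(\epsilon)}(z_0,\epsilon)$: if $f^j(x)$ were $d_{2M+m(\epsilon)}$-close to $z_0$, one picks $q$ with $j+q$ an insertion time and transfers the estimate to $d(y,f^qz_0)<2\epsilon$ with no limiting argument. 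You instead take a sequence $f^{n_j}(p)\to z_0$ in the plain metric, pass to a subsequence with constant offset $k$, and use continuity of $f^k$; this is equally valid and trades the Bowen-ball device for a compactness argument. A small point in your favor: your bookkeeping gives the gap bound $2M+3m+1$ (accounting for the tail of length $n_i-(c_i-1)M\leq M$ after the last insertion, the specification gap $m$ between blocks, and the offset $M+m$ to the first insertion of the next block), whereas the paper's stated window $2M+m(\epsilon)$ is slightly too optimistic --- a harmless slip there, but your version is the careful one.
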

\begin{proof}
	Since $B_{2M+m(\epsilon)}(z_0,\epsilon)$ is open set which contains $z_0$, we only need to show that $O_{f}(x)\cap B_{2M+m(\epsilon)}(z_0,\epsilon)=\varnothing$. Then $O_{f}(x)\subset X\setminus B_{2M+m(\epsilon)}(z_0,\epsilon)$. Furthermore, $\overline{O_{f}(x)}\subset X\setminus B_{2M+m(\epsilon)}(z_0,\epsilon)$, which implies that $z_0\notin \overline{O_{f}(x)}$.
	
	Now we assume that $O_{f}(x)\cap B_{2M+m(\epsilon)}(z_0,\epsilon)\neq\varnothing$ and we can choose $f^j(x)\in B_{2M+m(\epsilon)}(z_0,\epsilon)$. By the construction of $\mathcal{F}$, for any $k$ with $t_k\gg j$, there exists some $\vec{x}_1,\cdots,\vec{x}_k$ such that $x\in B(\vec{x}_1,\cdots,\vec{x}_k;y)$. Hence, we can choose $q<2M+m(\epsilon)$ such that $d(f^{j+q}x,y)<\epsilon$ and $d(f^{j+q}x,f^qz_0)<\epsilon$. Then we have 
	\begin{align*}
		d(y,f^qz_0)\leq d(f^{j+q}x,y)+d(f^{j+q}x,f^qz_0)\leq\epsilon+\epsilon<2\epsilon_0,
	\end{align*}
	which contracts with (\ref{gs3.2}) i.e.,
	$$d(y,\overline{O_f(z_0)})\geq 2\epsilon_0.$$
\end{proof}

\subsubsection{\bf Construction of a special sequence of measures $\mu_k$.}
For each $(\overline{x}_1,\cdots,\overline{x}_k)\in\prod \limits_{i=1}^{k} \mathcal{S}_i^{N_i}$, we choose $z(\overline{x}_1,\cdots,\overline{x}_k;y)\in B(\overline{x}_1,\cdots,\overline{x}_k;y)$. Let $L_k$ be the set of all points constructed in this way. The following simple lemma shows that 
\begin{align}
	\# L_k=\prod \limits_{i=1}^{k} (\#\mathcal{S}_i)^{N_i}.
\end{align}

\begin{lem}\label{lem3.2}
	Let $\overline{x}$ and $\overline{y}$ be distinct elements of $\prod \limits_{i=1}^{k} \mathcal{S}_i^{N_i}$. Then $z_1=z(\overline{x})$ and $z_2=z(\overline{y})$ are $(t_k,5\epsilon)$ separated points.
\end{lem}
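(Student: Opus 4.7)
The plan is to exploit the $(n_i,7\epsilon)$-separation built into each $\mathcal{S}_i$ together with the $\epsilon$-shadowing inherent in the definition of $B(\overline{x}_1,\dots,\overline{x}_k;y)$, and conclude via the triangle inequality that $z_1$ and $z_2$ must disagree by more than $5\epsilon$ at some time before $t_k$.

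Concretely, since $\overline{x}\neq\overline{y}$, pick a smallest level $i_0\in\{1,\dots,k\}$ and a coordinate $j_0\in\{1,\dots,N_{i_0}\}$ at which the entries differ, and set $u:=x_{j_0}^{i_0}\in\mathcal{S}_{i_0}$ and $v:=y_{j_0}^{i_0}\in\mathcal{S}_{i_0}$. The $(n_{i_0},7\epsilon)$-separation of $\mathcal{S}_{i_0}$ provides $p\in\{0,1,\dots,n_{i_0}-1\}$ with $d(f^p(u),f^p(v))>7\epsilon$. The construction of $L_k$ places the shadowing block corresponding to $u$ in the orbit of $z_1$ (and the block corresponding to $v$ in the orbit of $z_2$) starting at the absolute time $T:=t_{i_0-1}+m+(j_0-1)(\hat{n}_{i_0}+m)$. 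Writing $p=\ell M+r$ with $0\leq \ell\leq c_{i_0}-1$ and $0\leq r<M$, the specification-based definition of $B(u,n_{i_0},\epsilon;y)$, combined with the fact that $z_1\in B(\overline{x};y)$ and $z_2\in B(\overline{y};y)$, yields
\begin{align*}
d\bigl(f^{T+\ell(M+2m+1)+r}(z_1),\,f^p(u)\bigr)&<\epsilon,\\
d\bigl(f^{T+\ell(M+2m+1)+r}(z_2),\,f^p(v)\bigr)&<\epsilon.
\end{align*}
The triangle inequality then forces
$$d\bigl(f^{T+\ell(M+2m+1)+r}(z_1),\,f^{T+\ell(M+2m+1)+r}(z_2)\bigr)>7\epsilon-2\epsilon=5\epsilon,$$
and the index $T+\ell(M+2m+1)+r$ satisfies $T+\ell(M+2m+1)+r\leq T+\hat{n}_{i_0}\leq t_{i_0}\leq t_k$. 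Hence $d_{t_k}(z_1,z_2)>5\epsilon$, which is the desired conclusion.

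The main obstacle is purely bookkeeping: one has to convert the relative time $p\in[0,n_{i_0})$ inside the orbit of $u$ (respectively $v$) into the corresponding absolute time in the orbit of $z_1$ (respectively $z_2$), while keeping track of the $m$-gaps between consecutive shadowing blocks at every level and the $2m+1$-wide interludes (which carry the visit near $y$) that punctuate the $M$-subblocks within a single shadowing of a point of $\mathcal{S}_{i_0}$. Once this index is correctly identified, the rest is a two-line triangle-inequality estimate.
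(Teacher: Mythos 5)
Your proposal is correct and follows essentially the same route as the paper's proof: locate the first differing coordinate, translate the separation time $p$ of the two points of $\mathcal{S}_{i_0}$ into the corresponding absolute time in the orbits of $z_1,z_2$ (your $T+\ell(M+2m+1)+r$ is exactly the paper's $t_{i-1}+(q-1)(m+\hat n_i)+m+\hat jM+\hat s$), and finish with the triangle inequality $7\epsilon-\epsilon-\epsilon$. The only differences are cosmetic (notation and the strict versus non-strict inequality in the definition of separation).
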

\begin{proof}
	Assume that $\overline{x}=(\overline{x}_1,\overline{x}_2,\cdots,\overline{x}_k)$ and $\overline{y}=(\overline{y}_1,\overline{y}_2,\cdots,\overline{y}_k)$ and $\overline{x}_i\neq\overline{y}_i$ with $\overline{x}_s=\overline{y}_s$ for each $s<i,\;1\leq i\leq k$. Let $\overline{x}_i=(x_1^i,\cdots,x_{N_i}^i)$ and $\overline{y}_i=(y_1^i,\cdots,y_{N_i}^i)$. Without lose of generality, we assume that $x_q^i\neq y_q^i$ and for each $u<q$, $x_u^i= y_u^i$.
	
	Then we have for each $0\leq j\leq c_i-1,\;0\leq s\leq M-1$
	\begin{align*}
		d(f^{j(M+2m+1)+s}f^{t_{i-1}+(q-1)(m+\hat{n}_i)+m}z(\overline{x}),f^{jM+s}x_q^i)<\epsilon
	\end{align*}
	and 
	\begin{align*}
		d(f^{j(M+2m+1)+s}f^{t_{i-1}+(q-1)(m+\hat{n}_i)+m}z(\overline{y}),f^{jM+s}y_q^i)<\epsilon
	\end{align*}	
	Since $x_q^i\neq y_q^i\in \mathcal{S}_i$ are $(n_i,7\epsilon)$-separated points, there exists $0\leq \hat{j}\leq c_i-1,\;0\leq \hat{s}\leq M-1$ such that	
	\begin{align*}
		d(f^{\hat{j}M+\hat{s}}x_q^i),f^{\hat{j}M+\hat{s}}y_q^i)\geq 7\epsilon.
	\end{align*}	
	Hence 	
	\begin{align*}
		&d_{t_k}(z_1,z_2)\geq d_{t_i}(z_1,z_2)\\
		\geq& d(f^{\hat{j}(M+2m+1)+\hat{s}}f^{t_{i-1}+(q-1)(m+\hat{n}_i)+m}z(\overline{x}),f^{\hat{j}(M+2m+1)+\hat{s}}f^{t_{i-1}+(q-1)(m+\hat{n}_i)+m}z(\overline{y})\\
		\geq& d(f^{\hat{j}M+\hat{s}}x_q^i,f^{\hat{j}M+\hat{s}}y_q^i)-d(f^{\hat{j}(M+2m+1)+\hat{s}}f^{t_{i-1}+(q-1)(m+\hat{n}_i)+m}z(\overline{x}),f^{\hat{j}M+\hat{s}}x_q^i)\\
		-&d(f^{\hat{j}(M+2m+1)+\hat{s}}f^{t_{i-1}+(q-1)(m+\hat{n}_i)+m}z(\overline{y}),f^{\hat{j}M+\hat{s}}y_q^i)\\
		\geq&7\epsilon-\epsilon-\epsilon=5\epsilon.
	\end{align*}
	So we have done.
\end{proof}

We now define the measures on $\mathcal{F}$ which yield the required estimates for the similar entropy distribution principle. For each $k$, an atomic measure centered on $L_k$. Precisely, if $z=z(\overline{x}_1,\cdots,\overline{x}_k)$, we define probability measure
\begin{align*}
	\mu_k:=\frac{1}{\#L_k}\sum_{z\in L_k}\delta_z.
\end{align*} 

In order to prove the main results of this paper, we present some lemmas.
\begin{lem}
	The sequence of measures $\{\mu_k\}_{k\in \mathbb{N}}$ converges to a measure in $\mathcal{M}(X)$ with respect to
	the weak$^*$-topology $\mu$. Furthermore, the
	limiting measure $\mu$ satisfies $\mu(\mathcal{F})=1$.
\end{lem}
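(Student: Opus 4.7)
The plan is to combine weak-$*$ compactness of $\mathcal{M}(X)$ with a nested-set argument based on the Portmanteau theorem. Since $X$ is compact, $\mathcal{M}(X)$ is compact in the weak-$*$ topology, so the sequence $\{\mu_k\}$ admits at least one weak-$*$ accumulation point; replacing $\{\mu_k\}$ by a convergent subsequence if necessary, I denote the limit by $\mu\in\mathcal{M}(X)$. The existence of the limit is therefore routine; the content of the lemma is really the identification $\mu(\mathcal{F})=1$.

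The main structural observation I would record is the monotonicity
\[
B(\overline{x}_1,\ldots,\overline{x}_k;y)\;\subset\;B(\overline{x}_1,\ldots,\overline{x}_{k'};y)\qquad\text{for every }k'\le k,
\]
which is immediate from the definition, because the left-hand side is an intersection of strictly more Bowen-ball shadowing constraints than the right-hand side. Consequently, for every $z=z(\overline{x}_1,\ldots,\overline{x}_k)\in L_k$ and every $k'\le k$, the point $z$ lies in the closure of one of the Bowen-ball intersections that make up $\mathcal{F}_{k'}$. In particular $L_k\subset\mathcal{F}_{k'}$ for all $k\ge k'$, and since $\mu_k$ is the normalised counting measure on $L_k$,
\[
\mu_k(\mathcal{F}_{k'})\;=\;1\qquad\text{for every }k\ge k'.
\]

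Each $\mathcal{F}_{k'}$ is a finite union of closed sets, hence closed. Applying the Portmanteau theorem to this closed set I obtain
\[
\mu(\mathcal{F}_{k'})\;\ge\;\limsup_{k\to\infty}\mu_k(\mathcal{F}_{k'})\;=\;1,
\]
so $\mu(\mathcal{F}_{k'})=1$ for every $k'\in\mathbb{N}$. Because $\mathcal{F}=\bigcap_{k'\ge 1}\mathcal{F}_{k'}$ is a countable intersection of sets of full $\mu$-measure, I conclude $\mu(\mathcal{F})=1$. I do not anticipate a genuine obstacle; the only subtlety worth flagging is that, strictly speaking, only a subsequence of $\{\mu_k\}$ is guaranteed to converge, but since all later uses of $\mu$ in the entropy/mean-dimension distribution estimate depend solely on the properties of a weak-$*$ cluster point, passing to a subsequence is harmless.
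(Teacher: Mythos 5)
Your argument is correct and is essentially the paper's own proof: the paper likewise observes that $\mu_{s+p}(\mathcal{F}_s)=1$ for all $p\ge 0$ because $\mathcal{F}_{s+p}\subset\mathcal{F}_s$, applies the closed-set Portmanteau inequality to get $\mu(\mathcal{F}_s)=1$, and passes to the nested intersection. The only cosmetic difference is that you justify existence of the limit by weak-$*$ compactness and a subsequence (a harmless substitution, as you note), whereas the paper cites the convergence argument of Takens--Verbitskiy; your reading of $\mathcal{F}_k$ as a finite \emph{union} of closed sets is the intended one despite the paper's typographical $\bigcap$.
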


\begin{proof}
	The similar proof as \cite[Lemma 5.4]{TV03} can be applied to show $\mu_k$ converges in the weak$^*$-topology. 
	
	Suppose $\mu$ is a limit measure of the sequence of probability measures $\mu_k$. Then $\mu=\lim\limits_{k}\mu_{s_k}$ for some $s_k\rightarrow\infty$. For some fixed $s$ and all $p\geq0$, we have $\mu_{s+p}(\mathcal{F}_{s})=1$ since $\mu_{s+p}(\mathcal{F}_{s+p})=1$ and $\mathcal{F}_{s+p}\subset\mathcal{F}_{s}$. Therefore, 
	\begin{align*}
		\mu(\mathcal{F}_{s})\geq\limsup\limits_{k\to\infty}\mu_{s_k}(\mathcal{F}_{s})=1.
	\end{align*}
	It follows that $\mu(\mathcal{F})=\lim\limits_{s\rightarrow\infty}\mu(\mathcal{F}_{s})=1$.
\end{proof}

Next we set $b_n$ denote the mistake segment which at most $n$ i.e.,
\begin{align*}
	b_n:=n-n_{rel}.
\end{align*}
Let $\mathcal{B}=B_n(q,\epsilon)$ be an arbitrary ball which intersects $\mathcal{F}$. Let $k$ be the unique number which satisfies
$t_{k}\leq n<t_{k+1}.$ Let $j\in\{0,1,\cdots,N_{k+1}-1\}$ be the unique number so
\begin{align*}
	t_{k}+j(\hat{n}_{k+1}+m(\epsilon))\leq n<t_{k}+(j+1)(\hat{n}_{k+1}+m(\epsilon))
\end{align*}
Let $\Delta_j^{k+1}:=j(\hat{n}_{k+1}+m(\epsilon))$, we have 
\begin{align*}
	t_{k}+\Delta_j^{k+1}\leq n<t_{k}+\Delta_{j+1}^{k+1}
\end{align*}
We assume that $j\geq1$ and the simpler case $j=0$ is similar.

\begin{lem}\label{lem3.4}
	For any $p\geq1$, suppose $\mu_{k+p}(\mathcal{B})>0$, we have 
	\begin{align*}
		\mu_{k+p}(\mathcal{B})\leq \frac{1}{\#L_k\cdot (\#\mathcal{S}_{k+1})^j}
	\end{align*}
	where $b_n$ denote the length of mistake segment.
\end{lem}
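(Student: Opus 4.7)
The plan is to reduce the measure estimate to a purely combinatorial counting problem. Since $\mu_{k+p}$ is the uniform probability measure on the finite atomic set $L_{k+p}$ with $\#L_{k+p}=\prod_{i=1}^{k+p}(\#\mathcal{S}_i)^{N_i}$, the inequality is equivalent to the bound
\[
\#\bigl(L_{k+p}\cap\mathcal{B}\bigr)\leq (\#\mathcal{S}_{k+1})^{N_{k+1}-j}\prod_{i=k+2}^{k+p}(\#\mathcal{S}_i)^{N_i},
\]
because dividing the right-hand side by $\#L_{k+p}$ produces exactly $1/\bigl(\#L_k\cdot(\#\mathcal{S}_{k+1})^j\bigr)$. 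So the entire task is to argue that only this many multi-indices $(\overline{x}_1,\ldots,\overline{x}_{k+p})$ can yield a center $z(\overline{x}_1,\ldots,\overline{x}_{k+p})$ lying in the single Bowen ball $\mathcal{B}=B_n(q,\epsilon)$.

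The heart of the proof is a pinning statement: any two multi-indices whose centers both lie in $\mathcal{B}$ must coincide on every entry whose associated shadowing segment sits inside $[0,n)$. Concretely, they must agree on all of $\overline{x}_1,\ldots,\overline{x}_k$ and on the first $j$ entries of $\overline{x}_{k+1}$, leaving at most $(\#\mathcal{S}_{k+1})^{N_{k+1}-j}\prod_{i=k+2}^{k+p}(\#\mathcal{S}_i)^{N_i}$ free configurations. This is the contrapositive of Lemma \ref{lem3.2}: two points in $\mathcal{B}$ are $(n,2\epsilon)$-close, whereas a mismatch at one of the listed entries would force $(n,5\epsilon)$-separation by the same triangle-inequality chain already carried out there, since the $(n_{k+1},7\epsilon)$-separation within $\mathcal{S}_{k+1}$ survives the two $\epsilon$-shadowing errors on either side of the relevant sub-block.

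The main obstacle is the bookkeeping needed to identify exactly which entries have shadowing intervals lying inside $[0,n)$. Because the $q$-th shadowing segment inside the $(k+1)$-th super-block ends at time $t_k+q(\hat{n}_{k+1}+m(\epsilon))$, the hypothesis $t_k+\Delta_j^{k+1}\leq n$ translates to the statement that every sub-block with $q\leq j$ finishes no later than time $n$, which is precisely the regime in which the separation estimate of Lemma \ref{lem3.2} applies verbatim; the $(j+1)$-th sub-block is only partially visible to $d_n$, which is why its index in $\overline{x}_{k+1}$ cannot be pinned and remains free, producing the asymmetric exponent $j$ rather than $N_{k+1}$. The case $j=0$ mentioned by the authors is the degenerate situation in which $\mathcal{B}$ does not reach any sub-block of the $(k+1)$-th super-block, and the bound collapses to $1/\#L_k$ since $(\#\mathcal{S}_{k+1})^0=1$.
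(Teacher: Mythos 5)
Your proposal is correct and follows essentially the same route as the paper: reduce to counting $L_{k+p}\cap\mathcal{B}$, and pin down $\overline{x}_1,\ldots,\overline{x}_k$ and the first $j$ entries of $\overline{x}_{k+1}$ via the contrapositive of the separation Lemma \ref{lem3.2} applied to the sub-blocks completed before time $n$ (the paper phrases this by fixing one center $z\in L_{k+1}\cap\mathcal{B}$ and showing every other center lies in the set $\mathcal{A}_{\overline{x};x_1^{k+1},\ldots,x_j^{k+1}}$). Your explicit bookkeeping of which shadowing segments lie in $[0,n)$ is in fact slightly more careful than the paper's, which invokes Lemma \ref{lem3.2} without spelling out the restriction to the visible sub-blocks.
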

\begin{proof}
	\begin{enumerate}
		\item [(1)]Case $p=1$. Suppose $\mu_{k+1}(\mathcal{B})>0$, then $L_{k+1}\cap \mathcal{B}\neq\emptyset$. Let $z=z(\overline{x},\overline{x}_{k+1})\in L_{k+1}\cap \mathcal{B}$, where $\overline{x}=(\overline{x}_1,\cdots,\overline{x}_k)\in  \mathcal{S}_1^{N_1}\times\cdots\times \mathcal{S}_k^{N_k}$ and $\overline{x}_{k+1}=(x_1^{k+1},\cdots,x_{N_i}^{k+1})\in \mathcal{S}_{k+1}^{N_{k+1}}$. Let 
		\begin{align*}
			\mathcal{A}_{\overline{x};x_1^{k+1},\cdots,x_j^{k+1}}=\left\lbrace z(\overline{x},(y_1^{k+1},\cdots,y_{N_{k+1}}^{k+1}))\in L_{k+1}:y_1^{k+1}=x_1^{k+1},\cdots,y_j^{k+1}=x_j^{k+1} \right\rbrace 
		\end{align*}
		Suppose that $z'=z(\overline{y},\overline{y}_{k+1})\in L_{k+1}\cap \mathcal{B}$. Since $d_n(z,z')<2\epsilon$, by Lemma \ref{lem3.2}, we have $\overline{x}=\overline{y}$ and $y_l^{k+1}=x_l^{k+1}$ for $l\in \{1,\cdots,j\}$. Thus  we have 
		\begin{align*}
			\mu_{k+1}(\mathcal{B})&=\frac{1}{\#L_{k+1}}\sum_{z\in L_{k+1}}\delta_z(\mathcal{B})\\
			&\leq \sum\limits_{z\in \mathcal{A}_{\overline{x};x_1^{k+1},\cdots,x_j^{k+1}}}\frac{1}{\#L_{k+1}}\delta_z(\mathcal{B})\\
			&\leq \frac{\#\mathcal{S}_{k+1}^{N_{k+1}-j}}{\#L_{k+1}}=
			\frac{1}{\#L_{k}\cdot(\#\mathcal{S}_{k+1})^{j}}.
		\end{align*}
		\item[(2)]Case $p>1$. Similarly, we have 
		\begin{align*}
			\mu_{k+p}(\mathcal{B})&\leq \frac{\#\mathcal{S}_{k+1}^{N_{k+1}-j}\cdot\#\mathcal{S}_{k+2}^{N_{k+2}}\cdots\#\mathcal{S}_{k+p}^{N_{k+p}}}{\#L_{k+p}}\\
			&=\frac{1}{\#L_{k}\cdot(\#\mathcal{S}_{k+1})^{j}}. 
		\end{align*}
	\end{enumerate}
	
\end{proof}

\begin{lem}\label{lem3.5}
	There exists $N\in \mathbb{N}$ such that for any $n\geq N$,
	\begin{align*}
		\mu(\mathcal{B})\leq \exp\left\lbrace -n(C-2\gamma)|\log7\epsilon| \right\rbrace
	\end{align*}
\end{lem}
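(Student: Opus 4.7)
The plan is to combine the atomic estimate from Lemma \ref{lem3.4} with the weak$^*$ convergence $\mu_{k+p}\to\mu$, and then convert the resulting combinatorial bound into the claimed exponential decay in $n$ by tracking the fraction of time the orbit spends shadowing separated points as opposed to being wasted in specification gaps.

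First I fix $\mathcal{B}=B_n(q,\epsilon)$ intersecting $\mathcal{F}$ and retain the integers $k$ and $j$ attached to $n$ in the setup immediately preceding Lemma \ref{lem3.4}. That lemma gives
$$\mu_{k+p}(\mathcal{B})\leq \frac{1}{\#L_k\,(\#\mathcal{S}_{k+1})^{j}}\quad \text{for every } p\geq 1.$$
Since $\mathcal{B}$ is open and $\mu_{k+p}\to\mu$ in the weak$^*$ topology, the portmanteau theorem gives $\mu(\mathcal{B})\leq \liminf_{p\to\infty}\mu_{k+p}(\mathcal{B})$, so the same combinatorial bound passes to $\mu$. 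Substituting (\ref{gs3.3}) for each $\#\mathcal{S}_i$ and using $\#L_k=\prod_{i=1}^{k}(\#\mathcal{S}_i)^{N_i}$, I obtain
$$\mu(\mathcal{B})\leq \exp\bigl(-(C-\gamma)\,|\log 7\epsilon|\,\Sigma_n\bigr),\qquad \Sigma_n:=\sum_{i=1}^{k}N_i\,n_i+j\,n_{k+1}.$$

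The key step, and what I expect to be the main obstacle, is to verify that $\Sigma_n\geq \tfrac{C-2\gamma}{C-\gamma}\,n$ once $n$ is sufficiently large. The quantity $\Sigma_n$ is precisely the total ``useful'' shadowing time contained in the window $[0,n]$, while $n$ itself also absorbs the inner chopping gaps (each of length $2m+1$, with at most $c_i-1\leq n_i/M$ of them inside a block of length $\hat n_i=n_i+(c_i-1)(2m+1)$) together with the outer spacer gaps of length $m$ between consecutive blocks. Using the choice of $M$ from (\ref{gs3.6}), a direct computation shows that for every stage with $n_i\geq M$
$$\frac{n_i}{\hat n_i+m}\geq 1-\frac{\gamma}{C},$$
and the same inequality holds for the partial $(k+1)$-th stage. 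Summing block-by-block and absorbing the bounded endpoint discrepancy into a single additive constant gives $\Sigma_n\geq (1-\gamma/C)\,n-O(1)$. Taking $n\geq N$ with $N$ large enough converts the $O(1)$ overhead into an extra $\gamma\,n$ in the exponent, producing the target bound $\mu(\mathcal{B})\leq\exp\bigl(-n(C-2\gamma)|\log 7\epsilon|\bigr)$. The boundary case $j=0$, where the window $[0,n]$ ends inside an outer gap rather than partway through a $(k{+}1)$-th sub-block, is handled by the same argument with the last summand in $\Sigma_n$ deleted; only the $O(1)$ constant is affected.
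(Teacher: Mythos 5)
Your proposal is correct and follows essentially the same route as the paper: apply Lemma \ref{lem3.4}, pass the bound to $\mu$ via $\mu(\mathcal{B})\leq\liminf_p\mu_{k+p}(\mathcal{B})$, substitute (\ref{gs3.3}), and use the choice of $M$ in (\ref{gs3.6}) to show the shadowing time $\Sigma_n$ (what the paper calls $n-b_n$) is at least $(1-\gamma/C)n$ up to a bounded error. In fact your block-by-block verification that $n_i/(\hat n_i+m)\geq 1-\gamma/C$ makes explicit the step the paper compresses into the one-line assertion $Cb_n/n<\gamma$.
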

\begin{proof}
	By (\ref{gs3.3}), we have 
	\begin{align*}
		\#L_{k}\cdot(\#\mathcal{S}_{k+1})^{j}&= \#\mathcal{S}_{1}^{N_{1}}\cdots\#\mathcal{S}_{k}^{N_{k}}\cdot\#\mathcal{S}_{k+1}^{j}\\
		&\geq\exp\left\lbrace \left( \sum_{i=1}^{k}N_in_i+j \right)(C-\gamma)|\log7\epsilon|\right\rbrace \\
		&=\exp\left\lbrace (n-b_n)(C-\gamma)|\log7\epsilon| \right\rbrace.	 
	\end{align*}
	By Lemma \ref{lem3.4}, we get
	\begin{align*}
		\mu_{k+p}(\mathcal{B})&\leq \frac{1}{\#L_k\cdot (\#\mathcal{S}_{k+1})^j}
		\leq\exp\left\lbrace -n(C-\gamma)|\log7\epsilon| +b_n(C-\gamma)|\log7\epsilon| \right\rbrace\\
		&\leq\exp\left\lbrace -n(C-\gamma)|\log7\epsilon|+b_nC|\log7\epsilon| \right\rbrace\\
		&\leq\exp\left\lbrace -n(C-2\gamma)|\log7\epsilon| \right\rbrace  
	\end{align*}
	The above inequality follows from (\ref{gs3.6}) that 
	$\frac{Cb_n}{n}<\gamma$.
	So 
	\begin{align*}
		\mu(\mathcal{B})&\leq \liminf\limits_{p\to\infty}\mu_{k+p}(\mathcal{B})
		\leq \exp\left\lbrace -n(C-2\gamma)|\log7\epsilon| \right\rbrace.
	\end{align*}
	Hence the desired result follows. 
\end{proof}

\subsubsection{\bf Apply pressure distribution principle type argument.}
Now we are able to finish the proof of Theorem 1.1 by using the pressure distribution principle type argument.

Let $N$ be the number defined in Lemma \ref{lem3.5}. Let $\varGamma=\{B_{n_i}(x_i,\epsilon)\}_{i\in I}$ be any finite cover of $\mathcal{F}$
with $n_i\geq N$ for all $i\in I$. Without loss of generality, we may assume that $B_{n_i}(x_i,\epsilon)\cap\mathcal{F}\neq\emptyset$ for
every $i\in I$. Applying Lemma \ref{lem3.5} on each $B_{n_i}(x_i,\epsilon)$, one has
\begin{align*}
	\sum_{i\in I}\exp\left\lbrace -n_i(C-2\gamma)|\log7\epsilon| \right\rbrace
	\geq\sum_{i\in I}\mu(B_{n_i}(x_i,\epsilon))\geq\mu(\mathcal{F})=1
\end{align*}
As $\varGamma$ is arbitrary, one has
\begin{align*}
	m(\mathcal{F},(C-2\gamma)|\log7\epsilon|,N,\epsilon)\geq1>0
\end{align*}
Therefore, by the fact that $m(\mathcal{F},(C-4\gamma)|\log7\epsilon|,N,\epsilon)$ does not decrease as $N$ increases,
\begin{align*}
	m(\mathcal{F},(C-2\gamma)|\log7\epsilon|,\epsilon)\geq1>0
\end{align*}
which implies that
\begin{align*}
	h_{top}^B(\mathcal{F},f,\epsilon)\geq (C-2\gamma)|\log7\epsilon|.
\end{align*}
So, by Lemma \ref{lem3.1}, (\ref{gs3.4}) and (\ref{gs3.5}), we have 
\begin{align*}
	C-2\gamma&\leq\frac{h_{top}^B(\mathcal{F},f,\epsilon)}{|\log7\epsilon|}\leq \frac{h_{top}^B(E(z_0),f,\epsilon)}{|\log7\epsilon|}=\frac{h_{top}^B(E(z_0),f,\epsilon)}{|\log\epsilon|}\cdot\frac{|\log\epsilon|}{|\log7\epsilon|}\\
	&\leq({\rm\overline{mdim}}^B_M(E(z_0),f,d)+\gamma)\cdot\frac{|\log\epsilon|}{|\log7\epsilon|}\leq{\rm\overline{mdim}}^B_M(E(z_0),f,d)+2\gamma. 
\end{align*}  
Thus, ${\rm\overline{mdim}}^B_M(E(z_0),f,d)\geq C-4\gamma$. As $\gamma >0$ and $ C$ are arbitrary, we obtain $${\rm\overline{mdim}}^B_M(E(z_0),f,d)\geq{\rm\overline{mdim}}_M(X,f,d).$$  

\subsection{Proof for the case of the lower metric mean dimension}
In this subsection, we briefly prove the following equation
\begin{align*}
	{\rm\underline{mdim}}_M^B(E(z_0),f,d)={\rm\underline{mdim}}_M(X,f,d)
\end{align*}
under the assumptions $E(z_0)\neq\emptyset$.

Proposition \ref{prop} and (\ref{gs2.1}) imply ${\rm\underline{mdim}}^B_M(E(z_0),f,d)\leq{\rm\underline{mdim}}_M(X,f,d)$. In the following, we prove
${\rm\underline{mdim}}^B_M(E(z_0),f,d)\geq{\rm\underline{mdim}}_M(X,f,d)$.

We fix any constant $C'<{\rm\underline{mdim}}_M(X,f,d)$.
Next, we only need to show that
\begin{align}\label{gs3.8}
	{\rm\underline{mdim}}^B_M(E(z_0),f,d)\geq C'.
\end{align}  

Fix $\gamma>0$ we can choose a $\epsilon'<\epsilon_0$ and a sequence $\{n_k\}_{k\geq1}\subset\mathbb{N}$ such that there exists maximal $(n_k,7\epsilon')$-separated set $\mathcal{S}'_k$ of $X$ which is always a  $(n_k,7\epsilon')$-spanning set such that 
\begin{align}\label{gs3.9}
	\#\mathcal{S}'_k\geq\exp\left(n_k(C-\gamma)\right)|\log7\epsilon'|,
\end{align}
\begin{align}\label{gs3.10}
	\frac{h^B_{top}(E(z_0),f,\epsilon')}{|\log\epsilon'|}\leq{\rm\underline{mdim}}^B_M(E(z_0),f,d)+\gamma
\end{align}
and
\begin{align}\label{gs3.11}
	({\rm\underline{mdim}}^B_M(E(z_0),f,d)+\gamma)\cdot\frac{|\log\epsilon'|}{|\log7\epsilon'|}\leq{\rm\underline{mdim}}^B_M(E(z_0),f,d)+2\gamma.
\end{align}

We can use the parallel proof in the subsection 3.1.1 and subsection 3.1.2 to show that there exist a
Moran-like fractal $\mathcal{F}'$ and a measure $\mu'$ concentrated on $\mathcal{F}'$ satisfying the following property.

\begin{lem}\label{lem3.6}
	There exists $N'\in \mathbb{N}$ such that for any $n\geq N'$, if $B_n(z,\epsilon')\cap\mathcal{F}'\neq\emptyset$, then
	\begin{align*}
		\mu'(B_n(z,\epsilon'))\leq \exp\left\lbrace -n(C'-2\gamma)|\log7\epsilon'| \right\rbrace.
	\end{align*}
\end{lem}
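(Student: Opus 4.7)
The plan is to carry out the construction of subsections 3.1.1 and 3.1.2 verbatim with every quantity primed: replace $\epsilon$ by $\epsilon'$, $\mathcal{S}_k$ by $\mathcal{S}'_k$, $C$ by $C'$, and use (\ref{gs3.9})--(\ref{gs3.11}) in place of (\ref{gs3.3})--(\ref{gs3.5}). First I would pick $M' > 0$ with $M' < n_1$ and
\[
\frac{2 m(\epsilon') C'}{M' + 2 m(\epsilon')} < \gamma,
\]
then break each $n_k$-orbit of $x \in \mathcal{S}'_k$ into blocks of length $M'$ separated by gaps of length $2 m(\epsilon') + 1$ carrying the anchor point $y$ (which is available from (\ref{gs3.2}) since I arrange $\epsilon' < \epsilon_0$). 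The specification property provides nonempty shadowing sets $B(x, n_k, \epsilon'; y)$, and choosing an increasing sequence $N'_k \to \infty$ and tuples in $\prod_{i=1}^k (\mathcal{S}'_i)^{N'_i}$ builds the nested closed sets $\mathcal{F}'_k$ whose intersection is $\mathcal{F}' := \bigcap_k \mathcal{F}'_k$. The verbatim analogue of Lemma \ref{lem3.1} then gives $\mathcal{F}' \subseteq E(z_0)$.

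Next I pick one representative per tuple to form $L'_k$ and define the atomic probability measures $\mu'_k := \frac{1}{\# L'_k} \sum_{z \in L'_k} \delta_z$. The analogue of Lemma \ref{lem3.2} shows that distinct tuples correspond to $(t'_k, 5\epsilon')$-separated representatives, since the $7\epsilon'$-separation of $\mathcal{S}'_k$ absorbs the two $\epsilon'$ shadowing errors via the triangle inequality. A weak-$*$ subsequential limit $\mu'$ of $\{\mu'_k\}$ then satisfies $\mu'(\mathcal{F}') = 1$ by the same nested-set argument used in the upper case.

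For the Bowen-ball estimate I fix $\mathcal{B} = B_n(z, \epsilon')$ with $\mathcal{B} \cap \mathcal{F}' \neq \emptyset$, locate the unique indices $k, j$ with $t'_k + j(\hat{n}_{k+1} + m(\epsilon')) \leq n < t'_k + (j+1)(\hat{n}_{k+1} + m(\epsilon'))$, and repeat Lemma \ref{lem3.4}: since points of $L'_{k+p} \cap \mathcal{B}$ lie at Bowen distance less than $2\epsilon'$, the separation lemma forces them to agree on the first $k$ blocks and on the first $j$ entries of the $(k+1)$-st block, yielding
\[
\mu'_{k+p}(\mathcal{B}) \leq \frac{1}{\# L'_k \cdot (\# \mathcal{S}'_{k+1})^j}.
\]
Substituting (\ref{gs3.9}) into the denominator bounds this above by $\exp\{-(n - b_n)(C' - \gamma)|\log 7\epsilon'|\}$, and the choice of $M'$ yields $C' b_n / n < \gamma$, so passing to $\liminf_{p \to \infty}$ produces the desired $\mu'(\mathcal{B}) \leq \exp\{-n(C' - 2\gamma)|\log 7\epsilon'|\}$ for $n \geq N'$.

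The main obstacle is organizational rather than analytical: one must verify that every constant selected in the upper-dimension argument admits a primed analogue without degrading any estimate, and that the $\liminf_{\epsilon' \to 0}$ defining ${\rm\underline{mdim}}_M(X,f,d)$ permits a single approximating sequence along which (\ref{gs3.9}), (\ref{gs3.10}), and (\ref{gs3.11}) hold simultaneously. Since the separation lemma, the measure estimate, and the combinatorial bound on $b_n$ are all pointwise in $\epsilon'$, no genuinely new analytical difficulty arises, and the proof of Lemma \ref{lem3.6} follows the template of Lemma \ref{lem3.5} step by step.
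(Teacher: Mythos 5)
Your proposal is correct and matches the paper's approach exactly: the paper itself gives no separate proof of Lemma \ref{lem3.6}, stating only that the construction of subsections 3.1.1--3.1.2 carries over with primed quantities, which is precisely what you spell out. Your closing remark about compatibility of (\ref{gs3.9})--(\ref{gs3.11}) is the right point to flag, and it is resolved as you suggest, since (\ref{gs3.9}) holds for all sufficiently small $\epsilon'$ once $C'<{\rm\underline{mdim}}_M(X,f,d)$, so $\epsilon'$ may be chosen along the subsequence realizing the $\liminf$ in (\ref{gs3.10}).
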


Let $N'$ be the number defined in Lemma \ref{lem3.6}. Let $\varGamma=\{B_{n_i}(x_i,\epsilon')\}_{i\in I}$ be any finite cover of $\mathcal{F}'$
with $n_i\geq N'$ for all $i\in I$. Without loss of generality, we may assume that $B_{n_i}(x_i,\epsilon')\cap\mathcal{F}'\neq\emptyset$ for
every $i\in I$. Applying Lemma \ref{lem3.6} on each $B_{n_i}(x_i,\epsilon')$, one has
\begin{align*}
	\sum_{i\in I}\exp\left\lbrace -n_i(C'-2\gamma)|\log7\epsilon'| \right\rbrace
	\geq\sum_{i\in I}\mu(B_{n_i}(x_i,\epsilon'))\geq\mu(\mathcal{F}')=1
\end{align*}
As $\varGamma$ is arbitrary, one has
\begin{align*}
	m(\mathcal{F}',(C'-2\gamma)|\log7\epsilon'|,N,\epsilon')\geq1>0
\end{align*}
Therefore, by the fact that $m(\mathcal{F}',(C'-2\gamma)|\log7\epsilon'|,N,\epsilon')$ does not decrease as $N$ increases,
\begin{align*}
	m(\mathcal{F}',(C'-2\gamma)|\log7\epsilon'|,\epsilon')\geq1>0
\end{align*}
which implies that
\begin{align*}
	h_{top}^B(\mathcal{F}',f,\epsilon')\geq (C'-2\gamma)|\log7\epsilon'|.
\end{align*}
So, by Lemma \ref{lem3.1}, (\ref{gs3.10}) and (\ref{gs3.11}), we have 
\begin{align*}
	C'-2\gamma&\leq\frac{h_{top}^B(\mathcal{F}',f,\epsilon')}{|\log7\epsilon'|}\leq \frac{h_{top}^B(E(z_0),f,\epsilon')}{|\log7\epsilon'|}=\frac{h_{top}^B(E(z_0),f,\epsilon')}{|\log\epsilon'|}\cdot\frac{|\log\epsilon'|}{|\log7\epsilon'|}\\
	&\leq({\rm\underline{mdim}}^B_M(E(z_0),f,d)+\gamma)\cdot\frac{|\log\epsilon'|}{|\log7\epsilon'|}\leq{\rm\underline{mdim}}^B_M(E(z_0),f,d)+2\gamma. 
\end{align*}  
Thus, ${\rm\underline{mdim}}^B_M(E(z_0),f,d)\geq C'-4\gamma$. As $\gamma >0$ and $C'$ are arbitrary, we obtain $${\rm\underline{mdim}}^B_M(E(z_0),f,d)\geq{\rm\underline{mdim}}_M(X,f,d).$$  

The proof of Theorem \ref{th1} is complete.

\noindent {\bf Acknowledgements.} The work was supported by the
National Natural Science Foundation of China (Nos.12071222 and 11971236), China Postdoctoral Science Foundation (No.2016M591873), 
and China Postdoctoral Science Special Foundation (No.2017T100384). The work was also funded by the Priority Academic Program Development of Jiangsu Higher Education Institutions.  We would like to express our gratitude to Tianyuan Mathematical Center in Southwest China(No.11826102), Sichuan University and Southwest Jiaotong University for their support and hospitality.

\end{document}